\def\frk{\frak}               
\def\Phi{{\frk n}}
\def\Phi{{\frk N}}
\def\opn#1#2{\def#1{\operatorname{#2}}} 
\opn\chara{char} \opn\length{\ell} \opn\pd{pd} \opn\rk{rk}
\opn\projdim{proj\,dim} \opn\injdim{inj\,dim} \opn\rank{rank}
\opn\depth{depth} \opn\grade{grade} \opn\height{height}
\opn\embdim{emb\,dim} \opn\codim{codim}
\opn\Tr{Tr} \opn\bigrank{big\,rank}
\opn\superheight{superheight}\opn\lcm{lcm}
\opn\trdeg{tr\,deg}
\opn\reg{reg} \opn\lreg{lreg} \opn\ini{in} \opn\lpd{lpd}
\opn\size{size}
\opn\div{div} \opn\Div{Div} \opn\cl{cl} \opn\Cl{Cl}
\opn\Spec{Spec} \opn\Supp{Supp} \opn\supp{supp} \opn\Sing{Sing}
\opn\Ass{Ass} \opn\Min{Min}
\opn\Ann{Ann} \opn\Rad{Rad} \opn\Soc{Soc}
\opn\Im{Im} \opn\Ker{Ker} \opn\Coker{Coker} \opn\Am{Am}
\opn\Hom{Hom} \opn\Tor{Tor} \opn\Ext{Ext} \opn\End{End}
\opn\Aut{Aut} \opn\id{id}
\opn\nat{nat}
\opn\pff{pf}
\opn\Pf{Pf} \opn\GL{GL} \opn\SL{SL} \opn\mod{mod} \opn\ord{ord}
\opn\Gin{Gin} \opn\Hilb{Hilb}
\opn\aff{aff} \opn\con{conv} \opn\relint{relint} \opn\st{st}
\opn\lk{lk} \opn\cn{cn} \opn\core{core} \opn\vol{vol}
\opn\link{link} \opn\star{star}
\opn\gr{gr}
\def\pot#1#2{#1[\kern-0.28ex[#2]\kern-0.28ex]}
\opn\dirlim{\underrightarrow{\lim}}
\opn\inivlim{\underleftarrow{\lim}}
\def\Implies{\ifmmode\Longrightarrow \else
        \unskip${}\Longrightarrow{}$\ignorespaces\fi}
\def\implies{\ifmmode\Rightarrow \else
        \unskip${}\Rightarrow{}$\ignorespaces\fi}
\def\iff{\ifmmode\Longleftrightarrow \else
        \unskip${}\Longleftrightarrow{}$\ignorespaces\fi}
\newtheorem{Theorem}{Theorem}[section]
\newtheorem{Lemma}[Theorem]{Lemma}
\newtheorem{Remark}[Theorem]{Remark}
\newtheorem{Example}[Theorem]{Example}
\newtheorem{Definition}[Theorem]{Definition}
\let\epsilon\varepsilon
\let\phi=\varphi
\let\kappa=\varkappa
\def\qed{\ifhmode\textqed\fi
      \ifmmode\ifinner\quad\qedsymbol\else\dispqed\fi\fi}
\def\textqed{\unskip\nobreak\penalty50
       \hskip2em\hbox{}\nobreak\hfil\qedsymbol
       \parfillskip=0pt \finalhyphendemerits=0}
\def\dispqed{\rlap{\qquad\qedsymbol}}
\opn\dis{dis}
\def\pnt{{\raise0.5mm\hbox{\large\bf.}}}
\opn\Lex{Lex}
\begin{document}

%


\title{On the characterization of $f$-ideals}
\maketitle
\author\begin{center}{I. Anwar$^{a}$, H. Mahmood$^{b}$,  M. A. Binyamin$^{c}$, M. K. Zafar $^{d}$}\end{center}
\begin{center}\footnotesize{\textsl{ {
$^{a}$ COMSATS Institute of Information Technology Lahore, Pakistan.\\ E. mail: imrananwar@ciitlahore.edu.pk\\
$^{b}$ Government College University Lahore, Pakistan. E. mail:  hasanmahmood@gcu.edu.pk\\
$^{c}$ National Textile University, Faislabad, Pakistan. E. mail:  ahsanbanyamin@gmail.com\\
$^{d}$ Air University Islamabad, Pakistan. E. mail:
khurram.zafar@au.edu.pk}}}
\end{center}

\begin{abstract}
In this paper, we give the complete characterization of unmixed
$f$-ideals of degree $d\geq 2$ generalizing the results given in
\cite{im}.

 \vskip 0.4 true cm
 \noindent
  {\it Key words } : simplicial complexes, $f$-vector, facet ideal, Stanley Reisner ideal, height of an ideal .\\
 {\it 2000 Mathematics Subject Classification}: Primary 13P10, Secondary
13H10, 13F20, 13C14.\\
\end{abstract}

\section{Introduction}
Let $S=k[x_1, . . . ,x_n]$ be a polynomial ring over an infinite
field $k$.  One associates simplicial complexes to monomial ideals
in $S$.
$$\Delta \leftrightarrow I_{\mathcal{N}}(\Delta)$$
where $I_{\mathcal{N}}(\Delta)$ is known as the Stanley-Reisner
ideal or non-face ideal of $\Delta$. This one-to-one correspondence
laid the foundation of the algebraic study of simplicial
complexes, for instance see \cite{bh}, \cite{Mi} and \cite{v}.\\
In \cite{F1}, Faridi introduced another correspondence:
$$\Delta \leftrightarrow I_{\mathcal{F}}(\Delta)$$
where $I_{\mathcal{F}}(\Delta)$ is the facet ideal of a given
simplicial complex $\Delta$. Faridi in \cite{F1} and \cite{F2}
discussed algebraic properties of this ideal via combinatorial
properties of the \linebreak simplicial complex. She also discussed
its connections with the theory of  Stanley-Reisner rings.
Therefore, it is
interesting to explore some new connections between both the theories.\\
In \cite{im}, the authors investigated the following correspondence:
$$\delta_{\mathcal{F}}(I) \leftrightarrow I \leftrightarrow \delta_{\mathcal{N}}(I),$$
where $\delta_{\mathcal{F}}(I)$ and $\delta_{\mathcal{N}}(I)$ are
facet and non-face simplicial complexes associated to the
square-free monomial ideal $I$ respectively. They introduced the
concept of $f$-ideals and they gave the characterization of all
$f$-ideals of degree $2$. A square-free monomial ideal $I$ in
$S=k[x_1,x_2,\ldots,x_n]$ is said to be an $f$-ideal if and only if
both $\delta_{\mathcal{F}}(I)$ and $\delta_{\mathcal{N}}(I)$ have
the same $f$-vector. The concept of $f$-ideals is important in the
sense that it discovers new connections between both the theories
({\em facet ideal} and {\em Stanley-Reisner ideal}). In this paper,
we characterize all the unmixed $f$-ideals in the polynomial ring
$S=k[x_1,x_2,\ldots,x_n]$( see Theorem \ref{main}). Moreover, we
give a relation between the $f$-vectors of $\delta_{\mathcal{F}}(I)$
and $\delta_{\mathcal{N}}(I)$ (see Lemma \ref{faces}). We should
mention that our presentation was improved by the kind suggestions
of the Referee and Prof. Dorin Popescu.

\section{Basic Setup}
Here, we recall some elementary definitions and notions, which we
will use throughout in this paper.
\begin{Definition}
\em{A  simplicial complex $\Delta$ over a set of vertices
$V=\{x_1,x_2,\ldots,x_n\}$ is a collection of subsets of $V$, with
the property that $\{x_i\}\in \Delta$ for all $i$, and if $F\in
\Delta$ then all subsets of $F$ are also in $\Delta$(including the
empty set). An element of $\Delta$ is called a {\em face} of
$\Delta$, and the {\em dimension of a face} $F$ of $\Delta$ is
defined as $|F|-1$, where $|F|$ is the number of vertices of $F$.
The faces of dimension $0$ and $1$ are called {\em vertices and
edges}, respectively, and $\dim \emptyset=-1$. The maximal faces of
$\Delta$ under inclusion are called {\em facets}. }
\end{Definition}

\begin{Definition}\label{fvec}{\em
For a simplicial complex $\Delta$ having dimension $d$, its {\em
$f$-vector} is a $(d+1)$-tuple, defined as:
$$f(\Delta)=(f_0,f_1,\ldots,f_d),$$
where $f_i$ denotes the number of {\em $i$-dimensional faces} of
$\Delta.$ }\end{Definition}

The following definitions serve as a bridge between the
combinatorial and algebraic properties of the simplicial complexes
over the finite set of vertices $[n]$.

\begin{Definition}
{\em Let $\Delta$ be a simplicial complex over $n$ vertices
$\{v_1,\ldots , v_n\}$. Let $k$ be a field, $x_1, . . . , x_n$ be
indeterminates, and $S$ be the polynomial ring
$k[x_1,\ldots, x_n]$. \\
(a) Let $I_{\mathcal{F}}(\Delta)$  be the ideal of $S$ generated by
square-free monomials $x_{i1} \ldots x_{is}$, where $\{v_{i1} ,
\ldots, v_{is}\}$ is a facet of $\Delta$. We call
$I_{\mathcal{F}}(\Delta)$ the {\it facet ideal} of $\Delta$.\\
(b) Let $I_{\mathcal{N}}(\Delta)$  be the ideal of $S$ generated by
square-free monomials $x_{i1} \ldots x_{is}$ , where $\{v_{i1} ,
\ldots , v_{is}\}$ is not a face of $\Delta$. We call
$I_{\mathcal{N}}(\Delta)$ the {\em non-face ideal} or the {\it
Stanley- Reisner ideal} of $\Delta$.}
\end{Definition}

\begin{Definition}\label{fc}
{\em Let $I = (M_1,\ldots,M_q)$ be a square-free monomial ideal in a
polynomial ring $k[x_1, . . . , x_n]$. Furthermore, assume that
$\{M_1,\ldots ,M_q\}$ is a minimal generating set of $I$.\\
(a) Let $\delta_{\mathcal{F}}(I)$  be the simplicial complex over a
set of vertices $v_1,\ldots , v_n$ with facets $F_1,\ldots , F_q$,
where for each $i$, $F_i = \{v_j \, \, |\, \,  x_j |M_i, 1 \leq
j\leq n\}$. We call
$\delta_{\mathcal{F}}(I)$ the {\em facet complex of $I$.}\\
(b) Let $\delta_{\mathcal{N}}(I)$  be the simplicial complex over a
set of vertices $v_1,\ldots , v_n$, where $\{v_{i1},\ldots ,
v_{is}\}$ is a face of $\delta_{\mathcal{N}}(I)$ if and only if
$x_{i1} . . . x_{is} \not \in I$. We call $\delta_{\mathcal{N}}(I)$
the {\em non-face complex} or {\em the Stanley-Reisner complex} of
$I$. }
\end{Definition}
\begin{Definition} {\em Let $S=k[x_1,\ldots,x_n]$ be a polynomial ring,
the {\em support} of a\linebreak monomial $x^a =x_1^{a_1}\ldots
x_n^{a_n}$ in
$S$ is given by $\supp(x^a)=\{ x_i|a_i>0\}$.\\
Similarly, let $I=(g_1,\ldots,g_m)\subset S$ be a square-free
monomial ideal then $$\supp(I)=\bigcup_{i=1}^{m}\supp(g_i)$$ }
\end{Definition}
\begin{Remark}{\em
From \cite{im}, we know that for any square-free monomial ideal
$I\subset S$ the simplicial complex $\delta_{\mathcal{F}}(I)$ will
have the vertex set $[s]$, where $s=|\supp(I)|$. But
$\delta_{\mathcal{N}}(I)$ will be a simplicial complex on $[n]$. So
both $\delta_{\mathcal{F}}(I)$ and $\delta_{\mathcal{N}}(I)$ will
have the same vertex set if and only if
$\supp(I)=\{x_1,\ldots,x_n\}$.}
\end{Remark}
By considering the standard grading on the polynomial ring
$S=k[x_1,\ldots,x_n]$, we give the following definition.
 \begin{Definition}{\em
Let $I\subset S$ be a square-free monomial ideal with the minimal
monomial generators $g_1,\ldots,g_m$, then the $deg(I)$ is defined
as:
$$deg(I)=\sup\{deg(g_i)|\, \, i\in\{1,\ldots,m\}\}$$}
\end{Definition}

For more details, see \cite{im}, \cite{F1} and \cite{HP}.

\section{characterization of $f$-ideals.}
Here we recall some definitions from \cite{im};
\begin{Definition}{\em
Let  $I\subset S$ be a square-free monomial ideal with a
minimal\linebreak generating system $\{g_1,\ldots ,g_m\}$.  We say
that $I$ is a {\em pure square-free monomial ideal of degree $d$} if
and only if $\supp(I)=\{x_1,\ldots,x_n\}$ and $deg(g_i)=d>0$ for all
$1\leq i\leq m$. }
\end{Definition}

\begin{Definition}{\em Let $I$ be a square free monomial ideal in $S=k[x_1,x_2,....,x_n]$.
 We say that $I$ is an $f$-ideal if and only if $f(\delta_\mathcal{F}(I))=f(\delta_\mathcal{N}(I))$.}
  \end{Definition}
There is a natural question to ask: {\em  characterize all the
$f$-ideals in $S$}. In \cite{im}, authors precisely gave the
characterization of $f$-ideals of degree $2$. Next we extend this
result for $f$-ideals of degree $\geq 3$.

\begin{Theorem}\label{nec}{{\bf (necessary conditions) }}{\em Let $I=(g_1,g_2,\ldots,g_s)$ be a pure square-free monomial
 ideal of degree $d$ with $d\geq 2$  in $S=k[x_1,x_2,\ldots,x_n]$. If $I$ is an unmixed $f$-ideal, then $I$ satisfies the following conditions;\\
$(1)$ $I$ is of height $n-d$;\\
$(2)$ ${n \choose d}\equiv 0 \ \ \ (mod\ 2)$;\\
$(3)$ $s=\mid Ass(S/I)\mid=\frac{1}{2}{n \choose d}$.
 }

\end{Theorem}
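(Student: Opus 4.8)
The plan is to read both $f$-vectors directly off the combinatorics of the generating set and match them degree by degree; the only genuinely hard point is the purity of $\delta_{\mathcal{N}}(I)$. Since $I$ is pure of degree $d$, every generator $g_i$ is a squarefree monomial of degree $d$, so each facet of $\delta_{\mathcal{F}}(I)$ has exactly $d$ vertices; thus $\delta_{\mathcal{F}}(I)$ is pure of dimension $d-1$ and its $f$-vector is a $d$-tuple $(f_0,\dots,f_{d-1})$ with $f_0=n$ (because $\supp(I)=\{x_1,\dots,x_n\}$) and $f_{d-1}=s$. On the other side, a subset $A\subseteq[n]$ is a face of $\delta_{\mathcal{N}}(I)$ exactly when the squarefree monomial $x_A\notin I$, i.e. when $A$ contains no generator. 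Hence every subset of size $\le d-1$ is a face, giving $f_i(\delta_{\mathcal{N}}(I))=\binom{n}{i+1}$ for $i\le d-2$, while the faces of size $d$ are precisely the $\binom{n}{d}-s$ subsets that are not generators.

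First I would match the lengths of the two $f$-vectors. Equality forces $\dim\delta_{\mathcal{N}}(I)=d-1$; since $\delta_{\mathcal{N}}(I)$ is the Stanley--Reisner complex of $I$, this yields $\dim S/I=d$ and hence $\height I=n-d$. Comparing the top entries, $f_{d-1}(\delta_{\mathcal{F}}(I))=s$ and $f_{d-1}(\delta_{\mathcal{N}}(I))=\binom{n}{d}-s$ must agree, so $2s=\binom{n}{d}$; this simultaneously gives condition $(2)$ and the equality $s=\tfrac12\binom{n}{d}$ of $(3)$. The remaining equalities $f_i(\delta_{\mathcal{F}}(I))=\binom{n}{i+1}$ for $i\le d-2$ translate (the strongest being $i=d-2$) into a covering condition: every $(d-1)$-subset of $[n]$ lies in some generator.

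It remains to prove that $\delta_{\mathcal{N}}(I)$ is pure, which upgrades "$\height I=n-d$" to "$I$ is unmixed of height $n-d$" and also finishes $(3)$: once $\delta_{\mathcal{N}}(I)$ is pure of dimension $d-1$, its facets are exactly its $(d-1)$-faces, of which there are $\binom{n}{d}-s=s$; and since $I$ is radical, $\Ass(S/I)=\Min(S/I)$ is in bijection with these facets, whence $|\Ass(S/I)|=s$. To establish purity I would argue by contradiction. A facet of $\delta_{\mathcal{N}}(I)$ of dimension $<d-1$ must, since all smaller subsets are faces, have dimension exactly $d-2$; that is, it is a $(d-1)$-subset $F$ all of whose $n-d+1$ extensions $F\cup\{v\}$ are generators.

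This last step is the main obstacle. I expect to rule it out by a counting argument that plays the two covering conditions against the exact value $s=\tfrac12\binom{n}{d}$: the identity $\dim\delta_{\mathcal{N}}(I)=d-1$ says every $(d+1)$-subset contains a generator, while the $f_{d-2}$-equality says every $(d-1)$-subset lies in a generator. A double count of the incidences between $(d-1)$-subsets and generators, sharpened by these two constraints, should force the number of generators to exceed $\tfrac12\binom{n}{d}$, the desired contradiction. For $d=2$ this is exactly a Mantel--Turán estimate applied to the complementary graph on the non-generators, and I would expect the general case to proceed along the same extremal-combinatorial lines, so I would budget most of the effort here.
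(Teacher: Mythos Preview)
Your derivation of $\height I=n-d$, of ${n\choose d}\equiv 0\pmod 2$, and of $s=\frac{1}{2}{n\choose d}$ is exactly the computation the paper carries out (it packages the two steps as citations of \cite[Lemma~3.4]{im} and \cite[Lemma~3.2]{im}). The divergence is precisely where you flagged it: for the purity of $\delta_{\mathcal N}(I)$ the paper gives no argument of its own, but simply asserts it as a further consequence of \cite[Lemma~3.2]{im} and then quotes \cite[Lemma~5.3.10]{v} to convert purity into unmixedness and the count of associated primes.

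Your proposed route to purity, however, has a genuine gap: the extremal count you outline cannot close. The three constraints you isolate---every $(d{+}1)$-subset contains a generator, every $(d{-}1)$-subset lies in a generator, and $s=\frac{1}{2}{n\choose d}$---do \emph{not} forbid a $(d{-}1)$-subset all of whose $d$-extensions are generators. For $n=5$, $d=3$ take
\[
I=(x_1x_2x_3,\ x_1x_2x_4,\ x_1x_2x_5,\ x_1x_3x_4,\ x_3x_4x_5)\subset k[x_1,\dots,x_5].
\]
All three constraints hold, yet every $3$-extension of $\{1,2\}$ is a generator, so $\{1,2\}$ is a facet of $\delta_{\mathcal N}(I)$ and $\delta_{\mathcal N}(I)$ is not pure. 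One checks directly that $f(\delta_{\mathcal F}(I))=f(\delta_{\mathcal N}(I))=(5,10,5)$, so this $I$ \emph{is} an $f$-ideal; but $(x_3,x_4,x_5)$ is a minimal prime of height $3$, so $I$ is not unmixed and $|\Ass(S/I)|=6\neq 5$. Thus no Mantel--Tur\'an style inequality can deliver the contradiction you are budgeting for, and the purity/unmixedness claim does not follow from the $f$-ideal hypothesis alone.
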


\begin{proof}

Since $I$ is an unmixed $f$-ideal and $deg(I)=d$,  $dim
\delta_{\mathcal{F}}(I)=dim\delta_{\mathcal{N}}(I)$ if and only if
$ht(I)=n-d$ (by \cite[Lemma 3.4]{im}). Now as
$f_{d-1}(\delta_{\mathcal{F}}(I))=s$ from \cite[Lemma 3.2]{im} it
follows $f_{d-1}(\delta_{\mathcal{N}}(I))={n \choose d}-s$. But $I$
is an $f$-ideal and so $s={n \choose d}-s$ which gives
$s=\frac{1}{2}{n \choose d}$. Using \cite[Lemma 3.2]{im}, we see
that $\delta_{\mathcal{N}}(I)$ is a pure simplicial complex
generated by $\frac{1}{2}{n \choose d}$ facets of dimension $d-1$.
Thus \cite[Lemma 5.3.10]{v} gives that $I$ is unmixed of height
$n-d$ with $\mid Ass(S/I)\mid=\frac{1}{2}{n \choose d}$.

 \end{proof}

 \begin{Remark}{\em The above three conditions are also sufficient conditions  for a pure square
 free unmixed
 monomial ideal of degree $2$ to be an $f$-ideal as it is proved in \cite{im}. Below We  give an example
  to show that for $d\geq3$ above mentioned conditions  are not sufficient.}
 \end{Remark}

\begin{Example}{\em Consider the square-free monomial ideal $I$ in the polynomial ring $S=k[x_1,x_2,x_3,x_4,x_5]$ given by
 $$ I=(x_1x_2x_4, x_1x_2x_5\ ,x_3x_4x_5,\ x_1x_4x_5,\ x_2x_3x_5)$$
 \ \ \ \ \ \ \ \ \ \ \ \ \ \ \ \ \ \ \ \ \ \ \ \ \ \ \ $=(x_1,x_3)\cap (x_1,x_5)\cap (x_2,x_4)\cap (x_2,x_5)\cap (x_4,x_5)$\\
 Note that\\
 $(1)$ $I$ is unmixed of height $5-3=2$\\
 $(2)$ ${5 \choose 3}=10\equiv 0 \ \ \ (mod\ 2)$\\
 $(3)$ $s=\mid Ass(S/I)\mid=\frac{1}{2}{5 \choose 3}=5$\\
 Its facet complex $\delta_{\mathcal{F}}(I)$ and Stanley-Reisner complex $\delta_{\mathcal{N}}(I)$ are as follows\\
 $$\delta_{\mathcal{F}}(I)=<\{1,2,4\},\ \{1,2,5\},\ \{1,4,5\},\ \{2,3,5\},\ \{3,4,5\}>$$
 $$\delta_{\mathcal{N}}(I)=<\{2,4,5\},\ \{2,3,4\},\ \{1,3,5\},\ \{1,3,4\},\ \{1,2,3\}>$$
 But $f(\delta_{\mathcal{F}}(I))=(5,\ 9,\ 10)\neq f(\delta_{\mathcal{N}}(I))=(5,\ 10,\ 10)$. Thus $I$ is not $f$-ideal.}
 \end{Example}
In order to characterize the $f$-ideals, we need the following
lemmas.
\begin{Lemma}\label{faces} {\em Let $I=(g_1, g_2, \ldots, g_s)$ be pure square-free monomial ideal of degree $d$ in
$S=k[x_1,x_2,...,x_n]$. If $F$ is a face in
$\delta_{\mathcal{F}}(I)$ of dimension less than $d-1$, then $F$
belongs to $\delta_{\mathcal{N}}(I)$ as well. In particular;
$$f_i(\delta_{\mathcal{F}}(I))\leq f_i(\delta_{\mathcal{N}}(I))\hbox{\, \, \, for all\, \, \,    } \, i<d-1.$$}\end{Lemma}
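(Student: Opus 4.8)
The plan is to argue entirely by a degree comparison, using only the hypothesis that every minimal generator $g_k$ has degree exactly $d$. The key observation is that a monomial of degree strictly below $d$ cannot possibly lie in a monomial ideal whose generators all have degree $d$, so the non-face complex $\delta_{\mathcal{N}}(I)$ must contain every such low-dimensional face coming from $\delta_{\mathcal{F}}(I)$.

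First I would translate the dimension hypothesis into a cardinality statement: a face $F$ of $\delta_{\mathcal{F}}(I)$ of dimension $i < d-1$ satisfies $|F| = i+1 \leq d-1 < d$. Writing $x_F = \prod_{v_j \in F} x_j$ for the associated square-free monomial, this gives $\deg(x_F) = |F| < d$. Since $I$ is pure we have $\operatorname{supp}(I) = \{x_1,\ldots,x_n\}$, so both complexes share the vertex set $[n]$ and it is meaningful to ask whether $F$ is a face of $\delta_{\mathcal{N}}(I)$.

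The central step is then to show $x_F \notin I$. Because $I$ is a monomial ideal minimally generated by $g_1,\ldots,g_s$, any monomial lying in $I$ must be divisible by some $g_k$, and hence has degree at least $\deg(g_k) = d$. As $\deg(x_F) < d$, the monomial $x_F$ cannot belong to $I$. By the defining property of the Stanley--Reisner complex, the condition $x_F \notin I$ is exactly the statement that $F$ is a face of $\delta_{\mathcal{N}}(I)$, which is the first assertion of the lemma.

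For the inequality on $f$-vectors, I would note that for each fixed $i < d-1$ the argument above exhibits every $i$-dimensional face of $\delta_{\mathcal{F}}(I)$ as an $i$-dimensional face of $\delta_{\mathcal{N}}(I)$; this is an inclusion of the respective sets of $i$-faces (the identity map on faces is trivially injective), so counting gives $f_i(\delta_{\mathcal{F}}(I)) \leq f_i(\delta_{\mathcal{N}}(I))$. I do not anticipate a real obstacle here: the only points requiring care are the bookkeeping between ``dimension $< d-1$'' and ``cardinality $< d$'', and the recognition that membership in a degree-$d$ generated ideal is a purely degree-based test that is blind to monomials of degree below $d$.
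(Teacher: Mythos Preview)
Your proof is correct and follows essentially the same approach as the paper: both argue that a face $F$ of cardinality $r<d$ corresponds to a monomial $x_F$ of degree $<d$, which cannot lie in an ideal generated in degree $d$, hence $F\in\delta_{\mathcal{N}}(I)$. The paper phrases this as a contradiction while you argue directly, and you are slightly more explicit about deducing the $f$-vector inequality from the inclusion of $i$-faces, but the underlying idea is identical.
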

\begin{proof}
Let us take $F=\{j_1,j_2,\ldots, j_r\}$ with $r<d$ is a face in
$\delta_{\mathcal{F}}(I)$, then by definition there exists a
monomial $m=x_{j_1}\ldots x_{j_r}$ such that $m$ divides $g_i$ for
some $i(1\leq i\leq s)$. Suppose on the contrary that $F\not\in
\delta_{\mathcal{N}}(I)$, then by definition, we have
$m=x_{j_1}\ldots x_{j_r}\in I$ which is a contradiction as $m$ is a
monomial of degree strictly less than $d$.\end{proof}
\begin{Lemma}\label{d-1}{\em
Let $I=(g_1, g_2,\ldots,g_s)\subset S=k[x_1,x_2,\ldots,x_n]$ be an
$f$-ideal of degree $d$, then $$f_{d-2}(\delta_{\mathcal{F}}(I))={n
\choose d-1}.$$ }\end{Lemma}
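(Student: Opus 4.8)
The plan is to reduce everything to a single combinatorial count on the Stanley--Reisner side and then transport it to the facet side using the hypothesis that $I$ is an $f$-ideal. Since $f(\delta_{\mathcal{F}}(I))=f(\delta_{\mathcal{N}}(I))$ by the definition of an $f$-ideal, it will suffice to show that $f_{d-2}(\delta_{\mathcal{N}}(I))={n \choose d-1}$; the asserted value of $f_{d-2}(\delta_{\mathcal{F}}(I))$ then drops out immediately by comparing the $(d-2)$-entries of the two equal $f$-vectors.

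The core observation I would use is a degree count. By definition, a face of $\delta_{\mathcal{N}}(I)$ of dimension $d-2$ is a set $\{i_1,\ldots,i_{d-1}\}$ of $d-1$ vertices for which the square-free monomial $x_{i_1}\cdots x_{i_{d-1}}$ does \emph{not} belong to $I$. Because $I$ is a pure square-free monomial ideal of degree $d$, every minimal generator $g_j$ has degree exactly $d$, so membership $m\in I$ forces $m$ to be divisible by some $g_j$ and hence $\deg m\geq d$. A square-free monomial of degree $d-1$ can therefore never lie in $I$. Consequently every $(d-1)$-subset of the vertex set is a face of $\delta_{\mathcal{N}}(I)$. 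Purity gives $\supp(I)=\{x_1,\ldots,x_n\}$, so the vertex set is all of $[n]$, and there are precisely ${n \choose d-1}$ such subsets; thus $f_{d-2}(\delta_{\mathcal{N}}(I))={n \choose d-1}$.

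Combining the two steps yields $f_{d-2}(\delta_{\mathcal{F}}(I))=f_{d-2}(\delta_{\mathcal{N}}(I))={n \choose d-1}$, as claimed. I do not expect any serious obstacle here; the only point that needs care is making the degree comparison airtight, namely that membership of a monomial in a square-free monomial ideal is equivalent to divisibility by a minimal generator, which pins the minimal degree of nonzero elements of $I$ at $d$. As a sanity check one could instead invoke Lemma \ref{faces} with $i=d-2<d-1$ to obtain $f_{d-2}(\delta_{\mathcal{F}}(I))\leq f_{d-2}(\delta_{\mathcal{N}}(I))$, but the $f$-ideal hypothesis already forces equality, so in the end only the count of the right-hand side is genuinely needed.
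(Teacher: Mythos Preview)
Your argument is correct. Both your proof and the paper's ultimately rest on the same two facts: every $(d-1)$-subset of $[n]$ is a face of $\delta_{\mathcal{N}}(I)$, and the $f$-ideal hypothesis then forces $f_{d-2}(\delta_{\mathcal{F}}(I))=f_{d-2}(\delta_{\mathcal{N}}(I))=\binom{n}{d-1}$. The difference is in packaging. The paper argues by contradiction: assuming some $(d-1)$-subset $F\notin\delta_{\mathcal{F}}(I)$, it extends $F$ to a $d$-subset $G$ that (invoking Theorem~\ref{nec}) must lie in $\delta_{\mathcal{N}}(I)$, so $F\in\delta_{\mathcal{N}}(I)$; combining this with Lemma~\ref{faces} yields the strict inequality $f_{d-2}(\delta_{\mathcal{N}}(I))>f_{d-2}(\delta_{\mathcal{F}}(I))$, contradicting the $f$-ideal condition. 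Your route is shorter and more self-contained: the degree observation that no monomial of degree $d-1$ can lie in $I$ gives $f_{d-2}(\delta_{\mathcal{N}}(I))=\binom{n}{d-1}$ in one stroke, with no appeal to Theorem~\ref{nec} or Lemma~\ref{faces}. The paper's version keeps the argument parallel to the machinery used in Theorem~\ref{main}, but your direct count is the cleaner way to establish the lemma on its own.
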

\begin{proof}
Suppose on contrary that there exists a face
$F=\{j_1,j_2,\ldots,j_{d-1}\}\not\in\delta_{\mathcal{F}}(I)$. But
$F$ will be contained in some face $G$ with $dim(G)=d-1$. It is
clear from\linebreak Theorem \ref{nec} that $G\in
\delta_{\mathcal{N}}(I)$. Consequently, we have $F\in
\delta_{\mathcal{N}}(I)$. From  Lemma \ref{faces}, it is clear that
$\delta_{\mathcal{N}}(I)$ contains all the faces of
$\delta_{\mathcal{F}}(I)$ with dimension strictly less than $d-1$.
Therefore, we have
$f_{d-2}(\delta_{\mathcal{N}}(I))>f_{d-2}(\delta_{\mathcal{F}}(I))$
which is a contradiction.

\end{proof}
\begin{Theorem}\label{main}{{\bf (characterization) }}{\em Let $I\subset S$ be a
pure unmixed square-free monomial ideal of degree $d$ with the
minimal set of generators $\{g_1,g_2,\ldots,g_s\}$. Then $I$ will be
an unmixed $f$-ideal if and only if the following
conditions are satisfied.\\
$(1)$ $I$ is of height  $n-d$, \\
$(2)$ ${n \choose d}\equiv 0\, \, \, (\mod 2) \hbox{\, and
\,}|Ass(S/I)|=s=\frac{1}{2}{n \choose d}.$\\
$(3)$ $f_{d-2}(\delta_{\mathcal{F}}(I))={n \choose d-1}$.}

\end{Theorem}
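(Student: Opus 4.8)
The statement is an equivalence, and the forward implication is essentially already in hand: if $I$ is an $f$-ideal, then conditions $(1)$ and $(2)$ are precisely the content of Theorem \ref{nec}, while condition $(3)$ is Lemma \ref{d-1}. So I would dispose of necessity in a single sentence by invoking these two results, and devote the real work to the converse.

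For sufficiency, assume $(1)$, $(2)$, $(3)$ and aim to prove $f(\delta_{\mathcal{F}}(I)) = f(\delta_{\mathcal{N}}(I))$. The first step is to check that both $f$-vectors have the same length. Since $I$ is pure of degree $d$, the complex $\delta_{\mathcal{F}}(I)$ is pure of dimension $d-1$, and condition $(1)$ together with \cite[Lemma 3.4]{im} gives $\dim \delta_{\mathcal{N}}(I) = d-1$ as well; thus it suffices to prove $f_i(\delta_{\mathcal{F}}(I)) = f_i(\delta_{\mathcal{N}}(I))$ for each $i$ with $0 \le i \le d-1$.

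I would then split into the top index and the lower indices. For $i = d-1$, \cite[Lemma 3.2]{im} gives $f_{d-1}(\delta_{\mathcal{F}}(I)) = s$ and $f_{d-1}(\delta_{\mathcal{N}}(I)) = {n \choose d} - s$ (a square-free degree-$d$ monomial lies in $I$ exactly when it is one of the $s$ generators), and condition $(2)$, namely $s = \frac{1}{2}{n \choose d}$, makes these two numbers equal. For $i \le d-2$ the crux is to read condition $(3)$ correctly: $f_{d-2}(\delta_{\mathcal{F}}(I)) = {n \choose d-1}$ says that every one of the ${n \choose d-1}$ subsets of size $d-1$ is already a face of $\delta_{\mathcal{F}}(I)$. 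Because a simplicial complex is closed under taking subsets, this forces the entire $(d-2)$-skeleton of $\delta_{\mathcal{F}}(I)$ to coincide with the full skeleton of the simplex on $n$ vertices, so that $f_i(\delta_{\mathcal{F}}(I)) = {n \choose i+1}$ for every $i \le d-2$. Now I would sandwich: Lemma \ref{faces} gives $f_i(\delta_{\mathcal{F}}(I)) \le f_i(\delta_{\mathcal{N}}(I))$, while trivially $f_i(\delta_{\mathcal{N}}(I)) \le {n \choose i+1}$ since there are only that many $(i+1)$-subsets of $[n]$; the two outer ends both equal ${n \choose i+1}$, so equality holds throughout. Assembling the top index and the lower indices yields $f(\delta_{\mathcal{F}}(I)) = f(\delta_{\mathcal{N}}(I))$, i.e.\ $I$ is an $f$-ideal.

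I expect the only genuine obstacle to be the reinterpretation of condition $(3)$: the hypothesis is stated only about $f_{d-2}$, yet its real force is that it simultaneously pins down \emph{all} lower face-numbers through skeleton-completeness. It is precisely the combination of this maximality with the one-sided inequality of Lemma \ref{faces} and the trivial upper bound ${n \choose i+1}$ that produces the desired equalities; everything else is bookkeeping with the already-established lemmas.
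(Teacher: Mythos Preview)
Your proof is correct and follows essentially the same route as the paper: necessity via Theorem~\ref{nec} and Lemma~\ref{d-1}, and sufficiency by showing $f_i(\delta_{\mathcal{F}}(I)) = f_i(\delta_{\mathcal{N}}(I)) = \binom{n}{i+1}$ for $i\le d-2$ (using condition~(3), closure under subsets, Lemma~\ref{faces}, and the trivial upper bound) together with $f_{d-1}(\delta_{\mathcal{F}}(I)) = f_{d-1}(\delta_{\mathcal{N}}(I)) = s$ via \cite[Lemma 3.2]{im}. Your write-up is in fact more careful than the paper's terse version: you explicitly verify that the dimensions match, and you spell out the sandwich argument rather than leaving it implicit.
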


\begin{proof} If $I$ is an $f$-ideal, then apply Theorem \ref{nec} and Lemma \ref{d-1}.
Now suppose that  $I=(g_1,g_2,\ldots, g_s)$ is a pure unmixed
square-free monomial ideal of degree $d$ satisfying the conditions
$(1)$, $(2)$, and $(3)$. Then by Lemma \ref{faces} and \ref{d-1} it
follows that
$f_{i}(\delta_\mathcal{F}(I))=f_{i}(\delta_\mathcal{N}(I))={n
\choose i+1}$ for each $i\in\{1,2,....,d-2\}$ and
$f_{d-1}(\delta_\mathcal{F}(I))=f_{d-1}(\delta_\mathcal{N}(I))=s$
using \cite[Lemma 3.2]{im}. \end{proof}

Here we give an example of $f$-ideal in degree 3.
\begin{Example}{\bf{($f$-ideal of degree 3)}}\ {\em Let us consider the ideal $I$ in the polynomial ring $S=k[x_1,x_2,x_3,x_4,x_5,x_6]$ given by
\\
\\
$I=(x_1x_2x_3, x_1x_2x_5\ ,x_1x_3x_4,\ x_1x_4x_5,\ x_1x_5x_6,\ x_2x_3x_4,\ x_2x_3x_6,\ x_2x_4x_6,\ x_3x_4x_5,\ x_3x_5x_6).$\\
\\
Clearly, $\supp(I)=\{x_1,x_2,x_3,x_4,x_5,x_6\}$ . The primary
decomposition of $I$ shows that $I$ is unmixed of height 3.

$$I=(x_2,x_4,x_5)\cap (x_2,x_3,x_5)\cap (x_1,x_2,x_5)\cap (x_3,x_4,x_5)\cap (x_1,x_3,x_4)$$
 \ \ \ \ \ \ \ \ \ \ \ \ \ \ \ \  $\cap (x_1,x_2,x_3)\cap (x_1,x_3,x_6)\cap (x_3,x_5,x_6)\cap (x_1,x_4,x_6)\cap (x_2,x_4,x_6)$

$$\delta_{\mathcal{F}}(I)=<\{1,2,3\},\ \{1,2,5\},\ \{1,3,4\},\ \{1,4,5\},\ \{1,5,6\},$$
\ \ \ \ \ \ \ \  \ \ \ \ \ \ \ \ \ \ \ \ \ \ \ \ \ \ \ \ \ \ \    $\{2,3,4\},\ \{2,3,6\},\ \{2,4,6\},\ \{3,4,5\},\ \{3,5,6\}>$

$$\delta_{\mathcal{N}}(I)=<\{1,3,6\},\ \{1,4,6\},\ \{3,4,6\},\ \{1,2,6\},\ \{2,5,6\},$$
\ \ \ \ \ \ \ \ \ \ \ \ \ \ \ \ \ \ \ \ \ \ \ \ \ \ \ \ \ \ \  $\{4,5,6\},\ \{2,4,5\},\ \{1,2,4\},\ \{2,3,5\},\ \{1,3,5\}>$
\\

Note that $f$-vectors of the facet complex and the non-face complex of $I$ are same i.e. $f(\delta_{\mathcal{F}}(I))=f(\delta_{\mathcal{N}}(I))=(6,\ 15,\ 10)$ which shows that $I$ is an $f$-ideal of degree 3. }
\end{Example}
\begin{Remark} \em{ By \cite[Proposition 5.4.4]{v}, we see that for a given square-free monomial
ideal $I$ the {\em Hilbert Series} of the quotient ring $S/I$ can be
obtained from the $f$-vector of $\delta_{\mathcal{N}}(I)$. But such
result is not known for the $\delta_{\mathcal{F}}(I)$. It is worth
noting that if $I$ is an $f$-ideal, then one may obtain as well from
the $f$-vector of $\delta_{\mathcal{F}}(I)$ by using
\cite[Proposition 5.4.4]{v}. Moreover, one may explain the
$f$-ideals for more. }
\end{Remark}

\end{document}